\theoremstyle{plain}
\newtheorem{theorem}{Theorem}[section]
\newtheorem{lemma}[theorem]{Lemma}
\newtheorem{corollary}[theorem]{Corollary}
\newtheorem{proposition}[theorem]{Proposition}
\theoremstyle{definition}
\theoremstyle{remark}
\newtheorem*{thm}{Theorem}
\newcommand{\End}{\operatorname{End}}
\newcommand{\Rad}{\operatorname{Rad}}
\newcommand{\Jac}[1]{\mathrm{Jac}(#1)}
\newcommand{\QQ}{\mathbb{Q}}
\newcommand{\ZZ}{\mathbb{Z}}
\newcommand{\RR}{\mathbb{R}}
\newcommand{\C}{\mathcal{C}}
\begin{document}
\title{On a recent generalization of semiperfect rings}
\date{\today}
\author{Engin B\"uy\"uka\c{s}ik}
\address{Izmir Institute of Technology, Department of Mathematics, 35430, Urla, Izmir, Turkey}
\email{enginbuyukasik@iyte.edu.tr}

\author{Christian Lomp}
\address{Departamento de Matemâtica Pura da Faculdade de Ciências da Universidade do Porto, R.Campo Alegre 687, 4169-007 Porto, Portugal}
\email{clomp@fc.up.pt}

\thanks{The authors would like to thank Gena Puninski for pointing out
the example after Propositon \ref{semilocal_gs} and John Clark for having made valuable suggestions on our work. This paper was
written while the first author was visiting the University of Porto.
He wishes to thank the members of the Department of Mathematics for
their kind hospitality and the Scientific and Technical Research
Council of Turkey (T\"{U}BITAK) for their financial support. The
second author was supported by {\it Funda{\c{c}\~ao} para a
Ci\^encia e a Tecnologia} (FCT) through the {\it Centro de
Matem\'atica da Universidade do Porto} (CMUP)}

\subjclass{Primary: 16L30; Secondary: 16D10} 

\begin{abstract}
It follows from a recent paper by Ding and Wang that any ring which
is generalized supplemented as left module over itself is semiperfect. The
purpose of this note is to show that Ding and Wang's claim is not true and that the class of generalized supplemented  rings lies properly
between the class of semilocal and semiperfect rings. Moreover we rectify their ``theorem" by introducing a wider notion of local
submodules.
 \end{abstract}
 \maketitle

\renewcommand{\theenumi}{\arabic{enumi}}
\renewcommand{\labelenumi}{\emph{(\theenumi)}}

\section{Introduction}

H.\ Bass characterized in \cite{Bass} those rings $R$ whose left
$R$-modules have projective covers and termed them {\it left perfect
rings}. He characterized them as those semilocal rings which have a
left $t$-nilpotent Jacobson radical $\Jac{R}$. Bass's {\it
semiperfect rings} are those whose finitely generated left (or
right) $R$-modules have projective covers and can be characterized
as those semilocal rings which have the property that idempotents
lift modulo $\Jac{R}$. 

Kasch and Mares transferred in
\cite{KaschMares} the notions of perfect and semiperfect rings to
modules and characterized semiperfect modules by a lattice-theoretical condition as follows. A module $M$ is called {\it
supplemented} if for any submodule $N$ of $M$ there exists a
submodule $L$ of $M$ minimal with respect to $M=N+L$. The left
perfect rings are then shown to be exactly those rings whose left
$R$-modules are supplemented while the semiperfect rings are those
whose finitely generated left $R$-modules are supplemented.
Equivalently it is enough for a ring $R$ to be semiperfect if the
left (or right) $R$-module $R$ is supplemented. Recall that a
submodule $N$ of a module $M$ is called {\it small}, denoted by
$N\ll M$, if $N+L\neq M$ for all proper submodules $L$ of $M$.
Weakening the ``supplemented'' condition one calls a module {\it
weakly supplemented} if for every submodule $N$ of $M$ there exists
a submodule $L$ of $M$ with $N+L=M$ and $N\cap L \ll M$. The
semilocal rings $R$ are precisely those rings whose finitely
generated left (or right) $R$-modules are weakly supplemented. Again
it is enough that $R$ is weakly supplemented as left (or right)
$R$-module. There are many semilocal rings which are not
semiperfect, e.g.\ the localization of the integers at two distinct
primes. 

Recently another notion of  ``supplement'' submodule has
emerged called {\it Rad-supplement}. A submodule $N$ of a module $M$
has a {\it generalized supplement} or {\it Rad-supplement} $L$ in
$M$ if $N+L=M$ and $N\cap L \subseteq \Rad(L)$ (see
\cite{generalized}). Here $\Rad(L)$ denotes the {\it radical} of $L$, namely the intersection of
all maximal submodules of $L$ or, if $L$ has no such submodules, simply $L$ itself. If every submodule of $M$ has a
Rad-supplement, then $M$ is called {\it Rad-supplemented}. Note that
Rad-supplements $L$ of $M$ are also called {\it coneat submodules}
in the literature and can be characterized by the fact that any
module with zero radical is injective with respect to the inclusion
$L\subseteq M$ (see \cite[10.14]{ALW} or \cite{ALW2}). Recall that a submodule $N$ of a module $M$ is called {\it cofinite} if $M/N$ is finitely generated.
In our terminology, Ding and Wang claimed that the following is true:

\begin{thm}[{Ding-Wang \cite[Theorem 2.11]{generalized}}]
 For any left $R$-module $M$ the following conditions are equivalent:
\begin{enumerate}
 \item[(a)] every submodule of $M$ is contained in a maximal submodule and every cofinite submodule of $M$ has a Rad-supplement in $M$.
\item[(b)] $M$ has a small radical and can be written as an irredundant sum of local submodules.
\end{enumerate}
 Here a module $L$ is called {\it local} if $L/\Rad(L)$ is simple and $\Rad(L)$ is small in $L$.
\end{thm}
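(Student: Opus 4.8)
The plan is to prove the two implications separately, the substantive one being (a)$\Rightarrow$(b). For that direction, I would first read ``$\Rad(M)\ll M$'' off the first half of (a): if $\Rad(M)+N=M$ with $N\neq M$, then $N$ lies in some maximal submodule $K$ of $M$, and since $\Rad(M)\subseteq K$ as well we would get $M=\Rad(M)+N\subseteq K$, absurd; so $N=M$. Next, each maximal submodule $K$ of $M$ is cofinite (as $M/K$ is simple), so by the second half of (a) it has a Rad-supplement $L_K$, meaning $M=K+L_K$ and $K\cap L_K\subseteq\Rad(L_K)$. Since $L_K/(K\cap L_K)\cong M/K$ is simple, $K\cap L_K$ is a maximal submodule of $L_K$, hence $\Rad(L_K)\subseteq K\cap L_K$; together with the reverse inclusion this gives $\Rad(L_K)=K\cap L_K$ and $L_K/\Rad(L_K)$ simple, so each $L_K$ is local. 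Because $L_K\not\subseteq K$, the submodule $\sum_K L_K$ (sum over all maximal submodules $K$) is contained in no maximal submodule of $M$, so by the first half of (a) it is all of $M$. At this point $M$ is exhibited as a sum of local submodules with small radical, and only the passage to an \emph{irredundant} subfamily still summing to $M$ remains.

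For (b)$\Rightarrow$(a), I would begin from $M=\sum_{i\in I}L_i$ with the $L_i$ local, the sum irredundant, and $\Rad(M)\ll M$. The first step is to show $\Rad(M)=\sum_i\Rad(L_i)$: the inclusion ``$\supseteq$'' holds because the radical of a submodule sits inside the radical of the ambient module, and for ``$\subseteq$'' one notes that modulo $\sum_i\Rad(L_i)$ the image of each $L_i$ is zero or isomorphic to the simple module $L_i/\Rad(L_i)$ (using that $\Rad(L_i)$ is the unique maximal submodule of the local module $L_i$), so $M/\sum_i\Rad(L_i)$ is semisimple and thus contains $\Rad(M)$. Hence $M/\Rad(M)$ is semisimple; combined with $\Rad(M)\ll M$ this gives the first half of (a), since for proper $N$ we have $N+\Rad(M)\neq M$ and $M/(N+\Rad(M))$ is a nonzero quotient of a semisimple module, hence has a maximal submodule whose preimage is a maximal submodule of $M$ containing $N$. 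For the second half, if $N$ is cofinite then $M/N$ is finitely generated and equals $\sum_i(L_i+N)/N$, so $M=N+L_{i_1}+\dots+L_{i_n}$ for finitely many indices; a finite sum of local modules is supplemented, so $N$ has a supplement $L'\subseteq L_{i_1}+\dots+L_{i_n}$, and since $N\cap L'\ll L'$ forces $N\cap L'\subseteq\Rad(L')$, this $L'$ is the desired Rad-supplement.

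The step I expect to be the main obstacle is the final refinement in (a)$\Rightarrow$(b): turning the possibly highly redundant presentation $M=\sum_K L_K$ into an irredundant sum of \emph{genuinely} local submodules. A direct Zorn's lemma argument on the families $\{L_K\}$ with $\sum_K L_K=M$, ordered by reverse inclusion, is not available, because along a descending chain of such families the intersected family may have a strictly smaller sum. One would therefore have to exploit the local structure of the summands together with $\Rad(M)\ll M$ much more carefully, and it is precisely here that I would be wary of the statement, since demanding that the summands be local in the strict sense $L/\Rad(L)$ simple is a strong constraint.
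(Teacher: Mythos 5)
The statement you are asked to prove is precisely the claim that this paper is written to refute, so no proof can succeed; what matters is where your attempt breaks down, and it breaks down at one identifiable step in (a)$\Rightarrow$(b). Having produced, for each maximal submodule $K$, a Rad-supplement $L_K$ with $K\cap L_K\subseteq \Rad(L_K)$ and $L_K/(K\cap L_K)$ simple, you conclude that $\Rad(L_K)=K\cap L_K$ is the unique maximal submodule of $L_K$ and declare $L_K$ ``local''. But the theorem's own definition of local requires \emph{two} things: that $L/\Rad(L)$ be simple \emph{and} that $\Rad(L)\ll L$. You have verified only the first. A module with a unique maximal submodule (``w-local'' in the paper's terminology) need not have small radical, and Lemma \ref{Lemma: gs of maximal submodule is local} records exactly what your argument actually proves: a Rad-supplement of a maximal submodule is w-local, nothing more. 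Your closing paragraph senses that something is wrong but locates the danger in the irredundancy refinement; the real failure is earlier, in the locality of the $L_K$ themselves.

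That this gap cannot be repaired is shown by the paper's counterexample. For a nearly simple uniserial domain $R$ and $0\neq r\in\Jac{R}$, the ring $S=\End(R/rR)$ is semilocal with idempotent Jacobson radical, hence Rad-supplemented as a left module over itself by Proposition \ref{semilocal_gs}; being cyclic, it satisfies (a). If (b) held, the irredundant sum would necessarily be finite, so $S$ would be a finite sum of local (hence supplemented) left ideals, hence supplemented, hence semiperfect --- but $S$ is an indecomposable non-local semilocal ring and therefore not semiperfect. Concretely, the maximal ideal $I$ of $S$ is a Rad-supplement of the maximal ideal $K$ with $I\cap K=\Rad(I)$ maximal in $I$, yet $I$ is not local: it is not even a supplement of $K$. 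Your (b)$\Rightarrow$(a) direction and your deductions from (a) that $\Rad(M)\ll M$ and $M=\sum_K L_K$ are sound; the corrected statement, with ``local'' replaced by ``w-local'' and the conclusion adjusted accordingly, is Theorem \ref{theorem:structure of CGS modules} together with Corollary \ref{Corollary: finitely gen GS}.
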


Since it is well-known that finite sums of supplemented modules are
supplemented (see \cite[41.2]{Wisbauer}), Ding and Wang's claim implies  that a finitely generated module
$M$ is supplemented if and only if it is Rad-supplemented; in
particular for $M=R$, $R$ is semiperfect if and only if $R$ is
Rad-supplemented as left (or right) $R$-module. The purpose of this
note is to show that Ding and Wang's claim is not true and that
the class of Rad-supplemented rings lies properly between the class
of semilocal and semiperfect rings. Moreover we  rectify the
above theorem by introducing a different ``local'' condition for
modules.

Throughout the paper, $R$ is an associative ring with identity and
all modules are unital left $R$-modules.

\section{Examples}

It is clear from the introduction that one has the following
implications of conditions on submodules of a module with a small
radical:
$$\textit{supplements} \Rightarrow \textit{Rad-supplements} \Rightarrow \textit{weak supplements,}$$
and this implies that any semiperfect ring is Rad-supplemented and any
Rad-supplemented ring is semilocal.

\subsection{Semilocal rings which are not Rad-supplemented}
Since finitely generated modules have small radical it is clear that
finitely generated Rad-supplements are supplements. Thus any left
noetherian, left Rad-supplemented ring is semiperfect and any left
noetherian semilocal non-semiperfect ring would give an example of a
semilocal ring which is not Rad-supplemented. It is well-known that
the localization of the integers by two distinct primes $p$ and $q$, namely
$$ R= \left\{ \frac{a}{b} \in \QQ \mid b \text{ is neither divisible by }p \text{ nor by } q\right\},$$
is a semilocal noetherian ring which is not semiperfect and hence
gives such example.

\subsection{Rad-supplemented rings which are not semiperfect}

While a noetherian ring is Rad-supplemented if and only it
is semiperfect, we now show that a ring with idempotent Jacobson radical is semilocal
if and only if it is Rad-supplemented.

\begin{proposition}\label{semilocal_gs}
 Let $R$ be a ring with idempotent Jacobson radical $J$. Then $R$ is semilocal if and only if $R$ is Rad-supplemented as left or right $R$-module.
\end{proposition}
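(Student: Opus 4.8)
The plan is to establish the nontrivial implication of the proposition: that a semilocal ring $R$ with $J := \Jac{R}$ satisfying $J^2 = J$ is Rad-supplemented as a left module (the right-module case being symmetric). The converse implication needs no argument, as it is already recorded at the start of this section that every Rad-supplemented ring is semilocal. So fix such an $R$ and an arbitrary left ideal $I$ of $R$; the task is to exhibit a left ideal $L$ with $I + L = R$ and $I \cap L \subseteq \Rad(L)$.

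First I would move to $R/J$. Since $R$ is semilocal, $R/J$ is semisimple, so the left ideal $(I + J)/J$ is a direct summand: choose a left ideal $L$ with $J \subseteq L$ and $R/J = (I+J)/J \oplus L/J$. Unwinding the decomposition gives $I + L = R$ on the one hand and $(I + J) \cap L = J$ on the other; in particular $I \cap L \subseteq J$. What remains is to compute $\Rad(L)$ and check that it contains $I \cap L$.

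The heart of the proof is the claim $\Rad(L) = J$, and this is where the hypothesis $J = J^2$ enters. One inclusion is soft: $L/J$ is a direct summand of a semisimple module, hence semisimple, so $\Rad(L/J) = 0$; combined with the general fact $(\Rad(L) + J)/J \subseteq \Rad(L/J)$ this yields $\Rad(L) \subseteq J$. For the reverse inclusion, use that $\Jac{R}\,M \subseteq \Rad(M)$ for every module $M$, applied to $M = L$: then $JL \subseteq \Rad(L)$, and since $J \subseteq L$ we get $JL \supseteq J\cdot J = J^2 = J$, so $J \subseteq \Rad(L)$. Hence $\Rad(L) = J \supseteq I \cap L$, and $L$ is the desired Rad-supplement of $I$ in $R$.

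Since $I$ was arbitrary, $R$ is Rad-supplemented as a left $R$-module; running the same argument with right ideals gives the right-module statement. The step I expect to require the most care is the passage between $R/J$ and $R$: one must be sure to choose the complement $L$ so that $J \subseteq L$ (not merely so that $I + L = R$), because it is exactly this containment that lets the idempotency $J^2 = J$ push $J$ into $\Rad(L)$; without it the argument collapses. Everything else is a routine application of standard radical identities.
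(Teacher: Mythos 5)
Your proof is correct and follows essentially the same route as the paper: pass to the semisimple ring $R/J$, pick a complement $L$ of $(I+J)/J$ containing $J$, and use $J=J^2\subseteq JL\subseteq\Rad(L)$ to conclude $I\cap L\subseteq\Rad(L)$. The only cosmetic differences are that the paper treats the case $I\subseteq J$ separately (taking $R$ itself as the Rad-supplement) and invokes the identity $JK=\Rad(K)$ for modules over a semilocal ring, whereas you handle all $I$ uniformly and need only the inclusion $JL\subseteq\Rad(L)$.
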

\begin{proof}
 Let $R$ be semilocal with idempotent Jacobson radical. Let $I$ be a left ideal $I$ of $R$. If $I\subseteq J$, then $I$ is small and $R$ is a Rad-supplement of $I$. Hence suppose that $I\not\subseteq J$. Then $(I+J)/J$ is a direct summand in the semisimple
 ring $R/J$ and there exists a left ideal $K$ of $R$ containing $J$ such that $I+K=R$ and $I\cap K \subseteq J$. Since $J\subseteq K$ and $J$  is
idempotent, we have $J=J^2\subseteq JK=\Rad(K)$. Thus $K$ is a
Rad-supplement of $I$ in $R$. 

The converse is clear.
\end{proof}

Hence any indecomposable non-local semilocal ring with idempotent
Jacobson radical is an example of a Rad-supplemented ring which is
not semiperfect. To construct such a ring we will look at the
endomorphism rings of uniserial modules. Recall that a unital ring
$R$ is called a {\it nearly simple uniserial domain} if it is
uniserial as left and right $R$-module, i.e.\ its lattices of left,
resp.\ right, ideals are linearly ordered, such that $\Jac{R}$ is the
unique nonzero two-sided ideal of $R$. Let $R$ be any nearly simple
uniserial domain and $r\in \Jac{R}$ and let $S=\End(R/rR)$ be the
endomorphism ring of the uniserial cyclic right $R$-module $R/rR$.
Then $S$ is semilocal by a result of Herbera and Shamsuddin (see
\cite[4.16]{Facchini}). Moreover Puninski showed in
\cite[6.7]{Puninski} that $S$ has exactly three nonzero proper
two-sided ideals, namely the two maximal ideals
$$I=\{f\in S \mid f \text{ is not injective}\}\:\:\mbox{ and }\:\:K=\{g\in S \mid g \text{ is not surjective}\},$$
and the Jacobson radical $\Jac S=I\cap K$ which is idempotent. Note
that since $R/rR$ is indecomposable, $S$ also has no non-trivial
idempotents. Moreover $S$ is a prime ring. Hence $S$ is an
indecomposable, non-local semilocal ring with idempotent Jacobson
radical.

\medskip

Our example depends on the existence of nearly simple uniserial
domains. Such rings were first constructed by Dubrovin in 1980 (see \cite{Dubrovin}), as
follows. Let $$G = \{ f:\QQ \rightarrow \QQ \mid f(t)=at+b \text{ for }  a,b\in
\QQ \text{ and }  a>0\}$$ be the group of affine linear functions on the
field of rational numbers $\QQ$. Choose any irrational number
$\epsilon \in \RR$ and set $$ P = \{ f \in G \mid \epsilon \leq
f(\epsilon) \} \:\: \mbox{ and } \:\: P^+=\{f\in G \mid \epsilon <
f(\epsilon)\}.$$ Note that $P$, resp.\ $P^+$, defines a left order on
$G$. Take an arbitrary field $F$ and consider the semigroup group
ring $F[P]$ in which the right ideal $M=\sum_{g \in P^+} gF[P]$ is
maximal. The set $F[P]\setminus M$ is a left and right Ore set and
the corresponding localization $R$ is a nearly simple uniserial
domain (see \cite[6.5]{BessenrodthBrungsTorner}). Thus taking any
nonzero element $r \in R$, $S=\End(R/rR)$ is a Rad-supplemented
ring which is not semiperfect.

%

\section{Cofinitely Rad-supplemented modules}

We say that the module $M$ is {\it cofinitely Rad-supplemented} if
any cofinite submodule of $M$ has a Rad-supplement. Note that every
submodule of a finitely generated module is cofinite, and hence a
finitely generated module is Rad-supplemented if and only if it is
cofinitely Rad-supplemented. Ding and Wang's theorem tried to
describe cofinitely Rad-supplemented modules with small radical as
sums of local modules. In order to correct their theorem we introduce
a different module-theoretic local condition.

\subsection{w-local modules}
We say that a module is {\it w-local} if it has a unique maximal submodule. It
is clear that a module is w-local if and only if its radical is
maximal.  The question whether any projective w-local module must be local was
first studied by R.\ Ware in \cite[p.\ 250]{Ware}. There he proved that
if $R$ is commutative or left noetherian or if idempotents lift modulo $\Jac{R}$, then any projective w-local module must be local
(see \cite[4.9--11]{Ware}). The first example of a w-local non-local projective module was given by Gerasimov and Sakhaev in \cite{GS}.

\medskip

In general, unless the module has small radical, a w-local module need not be local as we now show by taking the abelian group $M=\QQ\oplus \ZZ/p\ZZ$ for any prime $p$. Clearly $J = \QQ\oplus 0$ is maximal in $M$. We show that $J= \Rad(M)$. Indeed, if $N$ is another maximal submodule of $M$, then $M=N+J$ and thus $J/ (N\cap J) \simeq M/N$ is simple, which is impossible since $\QQ$ has no simple factors. Thus $J = \QQ\oplus 0$ is the unique maximal submodule of (the non-local module) $M$.

The same construction is possible for any ring $R$ with a nonzero left 
$R$-module $Q$ with $\Rad(Q)=Q$. Then taking any simple
left $R$-module $E$ one concludes that $Q\oplus E$ is w-local module
which is not local. In other words, if every w-local module over
a ring $R$ is local, then $\Rad(Q)\neq Q$ for all non-zero left
$R$-modules $Q$, i.e. $R$ is a {\it left max ring}. Conversely,
any left module over a left max ring has a small radical, hence any
w-local module is local. We have just proved:

\begin{lemma} A ring $R$ is a left max ring if and only if every $w$-local left $R$-module is local.
\end{lemma}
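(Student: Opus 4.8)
The plan is to prove the two implications separately, using the observation already recorded in the text that a module is w-local precisely when its radical is a maximal submodule, together with the standard formula $\Rad(A\oplus B)=\Rad(A)\oplus\Rad(B)$ and the fact that over a left max ring every module has small radical. Throughout I use that $R$, being unital, has at least one simple left module $E$ (a maximal left ideal exists by Zorn's Lemma, so $R/\Jac{R}\neq 0$ and one may take a simple quotient).

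For the direction ``if every w-local left $R$-module is local, then $R$ is a left max ring'' I argue by contraposition, essentially reproducing the construction in the paragraph preceding the statement. Assume $R$ is not a left max ring, so there is a nonzero left $R$-module $Q$ with $\Rad(Q)=Q$, and set $M=Q\oplus E$. Then $\Rad(M)=\Rad(Q)\oplus\Rad(E)=Q\oplus 0$, and since $M/(Q\oplus 0)\cong E$ is simple, $Q\oplus 0$ is a maximal submodule of $M$; being equal to $\Rad(M)$ it is the unique one, so $M$ is w-local. However $\Rad(M)=Q\oplus 0$ is not small in $M$, because $(Q\oplus 0)+(0\oplus E)=M$ while $0\oplus E$ is a proper submodule (here $Q\neq 0$ is used). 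Hence $M$ is w-local but not local, which contradicts the hypothesis.

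For the converse, suppose $R$ is a left max ring and let $M$ be any w-local left $R$-module, so that $\Rad(M)$ is maximal in $M$ and in particular $M/\Rad(M)$ is simple. It remains to check that $\Rad(M)\ll M$. If this failed there would be a proper submodule $L\subsetneq M$ with $\Rad(M)+L=M$; then $\Rad(M/L)\supseteq(\Rad(M)+L)/L=M/L$, so $M/L$ is a nonzero module equal to its own radical, contradicting the left max property of $R$. Thus $\Rad(M)$ is small and $M$ is local. There is no substantial obstacle in this argument; the only points needing a line of care are the computation $\Rad(Q\oplus E)=Q\oplus 0$ and the observation that modules over a left max ring have small radical, both of which are routine.
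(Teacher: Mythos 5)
Your proof is correct and follows essentially the same route as the paper: the paper's argument is exactly the $Q\oplus E$ construction (by contraposition) for one direction and the fact that modules over a left max ring have small radical for the other, both of which you reproduce with the routine details filled in. Nothing to add.
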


\subsection{Rad-supplements of maximal submodules}

While local (or hollow) modules are supplemented, w-local modules
might not be.

\begin{lemma}\label{Lemma: w-local is CGS} Every w-local module $M$ is cofinitely Rad-supplemented.
\end{lemma}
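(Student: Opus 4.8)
The plan is to show something stronger and essentially formal: for a w-local module $M$, the module $M$ itself is a Rad-supplement of every proper cofinite submodule. The point is that w-locality, together with cofiniteness, forces any such submodule into $\Rad(M)$, after which the containment $N\cap M = N\subseteq \Rad(M)=\Rad(M)$ is exactly the Rad-supplement condition with $L=M$.

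Concretely, let $N$ be a cofinite submodule of $M$. First dispose of the trivial case $N=M$: then $L=0$ is a Rad-supplement, since $N\cap L=0=\Rad(0)$. Now assume $N\subsetneq M$. The key step is to observe that $M/N$ is a nonzero finitely generated module and hence possesses a maximal submodule; pulling this back along $M\to M/N$ produces a maximal submodule $K$ of $M$ with $N\subseteq K$. Since $M$ is w-local, it has a unique maximal submodule, namely $\Rad(M)$ (as recalled just before the statement, ``w-local'' is equivalent to ``$\Rad(M)$ is maximal''), so $K=\Rad(M)$ and therefore $N\subseteq \Rad(M)$. Finally take $L=M$: then $N+L=M$ and $N\cap L=N\subseteq \Rad(M)=\Rad(L)$, so $M$ is a Rad-supplement of $N$ in $M$. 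As $N$ was an arbitrary cofinite submodule, $M$ is cofinitely Rad-supplemented.

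The only place any argument is needed is the implication ``$N$ proper and cofinite $\Rightarrow N\subseteq\Rad(M)$'', which is where finite generation of $M/N$ is genuinely used, via the existence of a maximal submodule above a proper cofinite submodule; everything else is a direct verification of the definition. I would also remark that the cofiniteness hypothesis cannot simply be dropped: a w-local module need not have small radical (for instance $\QQ\oplus\ZZ/p\ZZ$ from the previous subsection), so a submodule not contained in $\Rad(M)$ need not admit a Rad-supplement, and thus the restriction to cofinite submodules is exactly what makes the lemma go through.
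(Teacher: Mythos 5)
Your proof is correct and follows essentially the same route as the paper's: use cofiniteness to place a proper submodule inside the unique maximal submodule $\Rad(M)$ and then take $M$ itself as the Rad-supplement. Your explicit treatment of the trivial case $N=M$ (via $L=0$) is a small tidy-up of a point the paper glosses over, but the argument is the same.
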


\begin{proof} Let $U$ be a cofinite submodule of $M$. Since $M/U$ is finitely generated, $U$ is contained in a maximal submodule of $M$ and so $U \subseteq \Rad M$. Now $M$ is a Rad-supplement of
$U$ in $M$, because $U+M=M$ and $U\cap M=U \subseteq \Rad M$.
\end{proof}

The gap in Ding and Wang's theorem is that Rad-supplements of
maximal submodules need not be local. However they are always w-local, as we now prove.

\begin{lemma}\label{Lemma: gs of maximal submodule is local}
Any Rad-supplement of a maximal submodule is w-local.
\end{lemma}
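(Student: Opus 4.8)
The plan is to let $L$ be a Rad-supplement of a maximal submodule $N$ of some module $M$, so that $L+N=M$ and $L\cap N\subseteq\Rad(L)$, and to show $L$ has a unique maximal submodule; by the remark in the excerpt this is equivalent to showing $\Rad(L)$ is maximal in $L$. The key observation is that $L\cap N$ cannot be all of $L$: if it were, then $L\subseteq N$, forcing $M=L+N=N$, contradicting that $N$ is a proper (maximal) submodule. Hence $L\cap N$ is a proper submodule of $L$, and from $L+N=M$ together with the maximality of $N$ we get
$$L/(L\cap N)\;\cong\;(L+N)/N\;=\;M/N,$$
which is simple. So $L\cap N$ is in fact a maximal submodule of $L$.

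Next I would show $L\cap N=\Rad(L)$. One inclusion, $L\cap N\subseteq\Rad(L)$, is exactly the Rad-supplement hypothesis. For the reverse inclusion, $\Rad(L)$ is the intersection of all maximal submodules of $L$; since $L\cap N$ is one such maximal submodule, $\Rad(L)\subseteq L\cap N$ follows immediately. Therefore $\Rad(L)=L\cap N$ is a maximal submodule of $L$, which is precisely the condition for $L$ to be w-local.

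The argument is essentially a three-line chain of containments, so I do not anticipate a genuine obstacle; the only point that needs a moment's care is ruling out the degenerate case $L\subseteq N$, and that is handled by the properness of $N$. It is worth noting explicitly that this argument does \emph{not} show $\Rad(L)$ is small in $L$ — that is exactly where the gap in Ding and Wang's theorem lies, since being w-local is strictly weaker than being local, and there is no reason a Rad-supplement of a maximal submodule should have small radical.
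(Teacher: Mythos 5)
Your proposal is correct and follows essentially the same route as the paper's proof: use the isomorphism $L/(L\cap N)\cong M/N$ to see that $L\cap N$ is maximal in $L$, then combine $L\cap N\subseteq\Rad(L)$ with $\Rad(L)\subseteq L\cap N$ to conclude $\Rad(L)=L\cap N$ is the unique maximal submodule. The paper states this more tersely, but the content is identical, and your closing remark correctly identifies that smallness of $\Rad(L)$ is not obtained --- which is exactly the gap in Ding and Wang's argument.
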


\begin{proof}
Let $K$ be a maximal submodule of a module $M$ and $L$ be a
Rad-supplement of $K$ in $M$. Then $K+L=M$ and $K\cap L \subseteq
\Rad L$. Since $M/K \cong L/K \cap L$ is simple, $\Rad L$ is a
maximal submodule of $L$. Thus $L$ is a $w$-local module.
\end{proof}

Let $R$ be again a nearly simple uniserial domain, $0\neq r\in
\Jac{R}$ and $S=\End(R/rR)$. Then $S$ has two maximal two-sided
ideals $I$ and $K$ such that $I\cap K=J=J^2$ where $J=\Jac{S}$. Since
$J\subset K$ we have $J=J^2 \subseteq JK \subseteq J$. Thus $I\cap
K=J=JK=\Rad(K)$. Analogously one shows $I\cap K = \Rad(I)$. Hence
$I$ and $K$ are mutual Rad-supplements. By \cite[p.\ 239 ]{Puninski}
$K$ is a cyclic uniserial left ideal, while the left ideal $I$ is not finitely
generated. Hence $K$ is a supplement of $I$. Moreover
$K$ is a supplement of any maximal left ideal $M$ of $S$ containing
$I$, because $K+M\supseteq K+I=S$ and $K\cap M\subseteq K$ is small
in $K$ since $K$ is uniserial. Hence $K$ is a local submodule. On
the other hand $I$ is not a supplement of $K$ since otherwise $K$
and $I$ would be mutual supplements and hence direct summands which is
impossible.


\subsection{Closure properties of cofinitely Rad-supplemented modules}

We establish some general closure properties of cofinitely
Rad-supplemented modules. To begin with, we prove the following.

\begin{lemma}\label{lemma:U+N has gs then U has ags}

Let $N\subseteq M$ be modules such that $N$ is cofinitely
Rad-supplemented. If $U$ is a cofinite submodule of $M$ such that $U+N$ has
a Rad-supplement in $M$, then $U$ also has a Rad-supplement in $M$.
\end{lemma}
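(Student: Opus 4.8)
The plan is to mimic the classical argument showing that a submodule has a supplement once a larger submodule does, keeping careful track of where finiteness is used so that the hypothesis on $N$ can be invoked. Fix a Rad-supplement $V$ of $U+N$ in $M$, so that $(U+N)+V=M$ and $(U+N)\cap V\subseteq\Rad V$. I claim that $V+W$ is a Rad-supplement of $U$ in $M$, where $W$ is a Rad-supplement, inside $N$, of a suitable cofinite submodule of $N$.

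First I would pin down $W$. The isomorphism $N/\bigl(N\cap(U+V)\bigr)\cong (N+U+V)/(U+V)=M/(U+V)$ (the last equality because $N+U+V\supseteq (U+N)+V=M$), together with the fact that $M/(U+V)$ is a homomorphic image of the finitely generated module $M/U$, shows that $N\cap(U+V)$ is cofinite in $N$. Since $N$ is cofinitely Rad-supplemented, it has a Rad-supplement $W$ in $N$, so $W\subseteq N$, $\bigl(N\cap(U+V)\bigr)+W=N$, and $\bigl(N\cap(U+V)\bigr)\cap W\subseteq\Rad W$. Producing this $W$ — that is, recognizing $N\cap(U+V)$ as the right cofinite submodule of $N$ to feed into the hypothesis — is the one point that really needs thought; the remaining verifications are routine.

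It then remains to check the two defining conditions for $V+W$. Since $N=\bigl(N\cap(U+V)\bigr)+W\subseteq U+V+W$, it follows that $M=(U+N)+V\subseteq U+V+W$, so $U+(V+W)=M$. For the radical condition, take $x\in U\cap(V+W)$ and write $x=v+w$ with $v\in V$, $w\in W$; as $w\in W\subseteq N$ we have $v=x-w\in(U+N)\cap V\subseteq\Rad V$, and as $v\in V$ we have $w=x-v\in\bigl(N\cap(U+V)\bigr)\cap W\subseteq\Rad W$. Using the elementary monotonicity $\Rad A\subseteq\Rad B$ for $A\subseteq B$ (if some $b\in\Rad A$ avoided a maximal submodule $C$ of $B$, then $B=C+Rb\subseteq C+A$, whence $A\cap C$ would be a maximal submodule of $A$ missing $b$), we get $x=v+w\in\Rad V+\Rad W\subseteq\Rad(V+W)$. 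Hence $V+W$ is a Rad-supplement of $U$ in $M$. The one genuine obstacle, as noted, is the cofiniteness of $N\cap(U+V)$ in $N$, since that is precisely what unlocks the hypothesis on $N$; note also that it is the monotonicity of $\Rad$ that makes the "Rad-supplement" version of the argument go through just as the "supplement" version does.
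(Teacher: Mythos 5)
Your argument is correct and is essentially the paper's own proof: the paper also takes a Rad-supplement $K$ of $U+N$, observes that $N\cap(U+K)$ is cofinite in $N$ via the isomorphism $N/(N\cap(U+K))\cong M/(U+K)$, picks a Rad-supplement $L$ of it inside $N$, and shows $K+L$ works. Your element-wise verification of $U\cap(V+W)\subseteq\Rad V+\Rad W$ just spells out the inclusion $U\cap(K+L)\subseteq (K\cap(U+L))+(L\cap(U+K))$ that the paper states without proof.
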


\begin{proof} Let $K$ be a Rad-supplement of $U+N$ in $M$.
Since $N/[N\cap (U+K)] \cong M/(U+K)$ is finitely generated, $N\cap
(U+K)$ is a cofinite submodule of $N$. Let $L$ be a Rad-supplement
of $N\cap (U+K)$ in $N$, i.e.\ $L+(N\cap (U+K))=N$ and $L\cap
(U+K)\subseteq \Rad L.$ Then we have $$M=U+N+K=U+K+L+(N\cap
(U+K))=U+K+L$$ and $$U \cap (K+L)\subseteq (K \cap (U+L))+(L \cap
(U+K))\subseteq \Rad K + \Rad L \subseteq \Rad (K+L).$$ Hence $K+L$
is a Rad-supplement of $U$ in $M$.
\end{proof}

With this last Lemma at hand we can now prove:

\begin{theorem}\label{Corollary: Arbitrary sum of CGS}
The class of cofinitely Rad-supplemented modules is closed under
arbitrary direct sums and homomorphic images.
\end{theorem}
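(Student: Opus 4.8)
\subsection*{Proof proposal}

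The plan is to treat the two closure properties separately: homomorphic images by a direct computation, and arbitrary direct sums by an induction built on Lemma~\ref{lemma:U+N has gs then U has ags}.

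For homomorphic images, suppose $f\colon M\to M'$ is a surjection with $M$ cofinitely Rad-supplemented, and let $U'$ be a cofinite submodule of $M'$. I would set $U=f^{-1}(U')$; since $M/U\cong M'/U'$ is finitely generated, $U$ is cofinite in $M$ and hence has a Rad-supplement $L$ in $M$, so $U+L=M$ and $U\cap L\subseteq\Rad L$. Applying $f$ gives $U'+f(L)=f(U)+f(L)=f(M)=M'$, and a short diagram chase ($y=f(x)\in U'\cap f(L)$ with $x\in L$ forces $f(x)\in U'$, i.e.\ $x\in f^{-1}(U')=U$, so $x\in U\cap L$) shows $U'\cap f(L)\subseteq f(U\cap L)\subseteq f(\Rad L)\subseteq\Rad f(L)$. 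Thus $f(L)$ is a Rad-supplement of $U'$ in $M'$.

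For arbitrary direct sums, write $M=\bigoplus_{i\in I}M_i$ with each $M_i$ cofinitely Rad-supplemented and let $U$ be a cofinite submodule of $M$. Since $M/U$ is finitely generated, its generators have support in a finite set $F=\{i_1,\dots,i_n\}\subseteq I$, so that $M=U+\sum_{k=1}^{n}M_{i_k}$. I would then prove, by induction on $n$, the statement: whenever $V$ is a cofinite submodule of $M$ with $M=V+\sum_{k=1}^{n}M_{j_k}$ for some indices $j_1,\dots,j_n\in I$, the submodule $V$ has a Rad-supplement in $M$. The base case $n=0$ is trivial, with $0$ as Rad-supplement. For the inductive step put $W=V+M_{j_{n+1}}$; then $W$ is cofinite in $M$ (since $W\supseteq V$, so $M/W$ is a quotient of $M/V$) and $M=W+\sum_{k=1}^{n}M_{j_k}$, so by the induction hypothesis $W=V+M_{j_{n+1}}$ has a Rad-supplement in $M$. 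Now Lemma~\ref{lemma:U+N has gs then U has ags}, applied with $N=M_{j_{n+1}}$ (which is cofinitely Rad-supplemented) and the cofinite submodule $U:=V$, whose sum with $N$ has a Rad-supplement in $M$, yields that $V$ itself has a Rad-supplement in $M$. Taking $V=U$ completes the proof.

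The routine parts are the preservation of cofiniteness under the operations involved and the elementary inclusion $f(\Rad L)\subseteq\Rad f(L)$. The one point requiring care --- and the real heart of the argument --- is the inductive step for direct sums: one must observe that enlarging $V$ to $W=V+M_{j_{n+1}}$ keeps it cofinite and keeps $M$ a sum of $W$ with only $n$ of the original summands, so that the induction hypothesis delivers precisely the hypothesis ``$U+N$ has a Rad-supplement'' demanded by Lemma~\ref{lemma:U+N has gs then U has ags}. No separate discussion of finite direct sums is needed, as that case is subsumed by the same induction.
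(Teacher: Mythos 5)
Your proposal is correct and follows essentially the same route as the paper: the homomorphic-image part is the paper's computation (the paper works with the quotient $M/N$ and shows $(L+N)/N$ is a Rad-supplement of $K/N$, which is your pullback argument read through the isomorphism), and the direct-sum part is the paper's repeated application of Lemma~\ref{lemma:U+N has gs then U has ags}, merely packaged as a formal induction whose base case is the paper's observation that $M$ itself trivially has a Rad-supplement.
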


\begin{proof}
Let $\C$ denote the class of cofinitely Rad-supplemented left
$R$-modules over a fixed ring $R$. We will first show that $\C$ is
closed under factor modules. First of all, it is clear that $\C$ is
closed under isomorphisms, since being Rad-supplemented is a 
lattice-theoretical notion. Take any $M\in \C$ and $f:M\rightarrow X$, where
$X$ is a left $R$-module. We may assume that $f(M)$ is of the form
$M/N$ for some submodule $N$ of $M$. Let $K/N$ be a cofinite
submodule of $M/N$. Then $K$ is a cofinite submodule of $M$, so that
$K$ has a Rad-supplement $L$ in $M$. That is, $K+L=M$ and $K\cap L
\subseteq \Rad(L)$. Then we have $K/N + (L+N)/N=M/N$ and
$$K\cap(L+N)/N=(K\cap L +N)/N \subseteq (\Rad L +N)/N \subseteq \Rad
((L+N)/N).$$ Therefore $(L+N)/N$ is a Rad-supplement of $K/N$ in
$M/N$. Hence $M/N \in \C$.

Let $M=\bigoplus_{i \in I}M_{i}$, with $M_i \in \C$ and $U$ a
cofinite submodule of $M$. Then there is a finite subset
$F=\{i_1,i_2,\ldots, i_k\}\subseteq I$ such that $M=U + \left(
\bigoplus_{n=1}^k M_{i_n} \right)$. Since
$(U+\bigoplus_{n=2}^{k}M_{i_{n}})$ is cofinite and
$M=M_{i_{1}}+(U+\bigoplus_{n=2}^{k}M_{i_{n}})$ has trivially a
Rad-supplement in $M$, by Lemma \ref{lemma:U+N has gs then U has
ags} $U+\bigoplus_{n=2}^{k}M_{i_{n}}$ has a Rad-supplement in $M$.
By repeated use of Lemma \ref{lemma:U+N has gs then U has ags} $k-1$
times we get a Rad-supplement for $U$ in $M$. Hence $M$ is a
cofinitely Rad-supplemented.
\end{proof}

It follows from Theorem \ref{Corollary: Arbitrary sum of CGS} that a ring $R$ is left Rad-supplemented if and only if every left $R$-module is cofinitely Rad-supplemented.

Now we give an example showing that there are cofinitely Rad-supplemented modules, which are not Rad-supplemented. 
Let $N\subseteq M$ be left $R$-modules. If $M/N$ has no maximal submodule, then any proper Rad-supplement $L$ of $N$ in $M$ also has no maximal submodules, i.e.\ $L=\Rad(L)$, because if $L$ is a Rad-supplement of $N$ in $M$ and $K$ a maximal submodule of $L$, then 
$$M/(N+K) \simeq L/((N\cap L) + K) = L/K \neq 0$$
is a nonzero simple module,  a contradiction.

Next let $R$ be a discrete valuation ring with quotient field $K$. Then $F/T \cong K=\Rad(K)$ as $R$-modules, for a free $R$-module $F$ and $T \subseteq F$. Since $R$ is hereditary, any submodule of $F$ is projective and has a proper radical; thus $T$ cannot have a Rad-supplement in $F$ by the preceding remark, i.e.\ $F$ is not Rad-supplemented. On the other hand, since $R$ is local and hence Rad-supplemented, $F$ is cofinitely Rad-supplemented by Theorem \ref{Corollary: Arbitrary sum of CGS}.

\medskip

Call a module $M$ \emph{radical-full} if $M=\Rad(M)$. Obviously radical-full modules are Rad-supplemented.

\begin{lemma}\label{Lemma: M/N has no maximal and N CGS then M CGS} Any extension of a radical-full module by a cofinitely Rad-supplemented module is cofinitely Rad-supplemented.
\end{lemma}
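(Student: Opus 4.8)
The plan is to fix an arbitrary cofinite submodule $U$ of $M$ and to manufacture a Rad-supplement for it out of the cofinitely Rad-supplemented submodule $N$, whose quotient $M/N$ is radical-full. The decisive first step is to observe that $U+N=M$. Indeed, $M/(U+N)$ is a quotient of $M/U$, hence finitely generated, and it is simultaneously a quotient of $M/N$. Since $M/N=\Rad(M/N)$ and the radical of any quotient $X/Y$ contains the image of $\Rad(X)$, every quotient of a radical-full module is again radical-full; thus $M/(U+N)$ is radical-full. But the radical of a finitely generated module is small and therefore proper unless the module is zero, so a finitely generated radical-full module must vanish. Hence $M/(U+N)=0$, that is $U+N=M$.

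With this in hand I would transfer cofiniteness to $N$. The isomorphism $N/(N\cap U)\cong (N+U)/U=M/U$ shows that $N\cap U$ is a cofinite submodule of $N$. As $N$ is cofinitely Rad-supplemented by hypothesis, $N\cap U$ admits a Rad-supplement $L$ in $N$, meaning $L+(N\cap U)=N$ and $L\cap(N\cap U)\subseteq\Rad(L)$.

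It remains to verify that this same $L$ is a Rad-supplement of $U$ in $M$. For the sum, $U+L=U+L+(N\cap U)=U+N=M$, using $L+(N\cap U)=N$ together with the first step and the fact that $N\cap U\subseteq U$. For the intersection, $L\subseteq N$ gives $U\cap L=(U\cap N)\cap L=L\cap(N\cap U)\subseteq\Rad(L)$, which is exactly the Rad-supplement condition. Thus the arbitrary cofinite submodule $U$ has a Rad-supplement in $M$, and $M$ is cofinitely Rad-supplemented. The one genuinely non-routine point is the reduction $U+N=M$ in the first step; once that observation is secured, the remaining arguments are purely formal manipulations of the Rad-supplement inside $N$, so I expect no further obstacle.
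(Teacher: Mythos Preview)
Your proof is correct and follows essentially the same approach as the paper: both first establish the key reduction $U+N=M$ from radical-fullness of $M/N$, and then exploit that $N$ is cofinitely Rad-supplemented. The only cosmetic difference is that the paper invokes Lemma~\ref{lemma:U+N has gs then U has ags} at this point (with $K=0$ as the trivial Rad-supplement of $U+N=M$), whereas you unpack that lemma's argument directly by constructing the Rad-supplement $L$ of $N\cap U$ inside $N$ and verifying it works for $U$ in $M$.
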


\begin{proof}Let $N$ be a cofinitely Rad-supplemented submodule of $M$ such that $M/N$ is radical-full. For any cofinite $U\subseteq M$,   $(U+N)/N$ is a cofinite submodule of  $M/N$ and hence $U+N = M$. Hence, by Lemma \ref{lemma:U+N has gs then U has ags}, $U$ has a Rad-supplement in $M$.
\end{proof}

\subsection{Characterization of Rad-supplemented modules with small radical}

Recall that Ding and Wang's theorem expresses a cofinitely
Rad-supplemented module with small radical as an irredundant sum of
local modules, which is not possible in general as shown above. Here
we show that any cofinitely Rad-supplemented module can be written
as the sum of w-local modules instead. The following is a cofinitely Rad-supplemented analogue of a result  in \cite{ABS} by Alizade, Bilhan and Smith for
cofinitely supplemented modules.

\begin{theorem}\label{theorem:structure of CGS modules} The following statements are equivalent for a module $M$.
\begin{enumerate}
\item[(a)] $M$ is cofinitely Rad-supplemented;
\item[(b)] every maximal submodule of $M$ has a Rad-supplement in $M$;
\item[(c)] $M/wLoc(M)$ has no maximal submodules, where $wLoc(M)$ is the
sum of all w-local submodules of $M$;
\item[(d)] $M/cgs(M)$ has no maximal submodules, where $cgs(M)$ is the
sum of all cofinitely Rad-supplemented submodules of $M$.
\end{enumerate}
\end{theorem}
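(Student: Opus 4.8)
The plan is to prove the cycle of implications $(a)\Rightarrow(b)\Rightarrow(c)\Rightarrow(d)\Rightarrow(a)$, using the closure results of Theorem~\ref{Corollary: Arbitrary sum of CGS} and Lemma~\ref{Lemma: M/N has no maximal and N CGS then M CGS} together with Lemmas~\ref{Lemma: w-local is CGS} and~\ref{Lemma: gs of maximal submodule is local} to move between the various submodule-theoretic conditions.

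The implication $(a)\Rightarrow(b)$ is immediate, since any maximal submodule $K$ of $M$ satisfies $M/K$ simple, hence finitely generated, hence is cofinite. For $(b)\Rightarrow(c)$ I would argue by contradiction: if $M/wLoc(M)$ had a maximal submodule, it would be of the form $K/wLoc(M)$ for some maximal submodule $K$ of $M$ with $wLoc(M)\subseteq K$. By hypothesis $K$ has a Rad-supplement $L$ in $M$, and by Lemma~\ref{Lemma: gs of maximal submodule is local} $L$ is w-local, so $L\subseteq wLoc(M)\subseteq K$; but then $M=K+L=K$, contradicting maximality of $K$. For $(c)\Rightarrow(d)$ I would show $wLoc(M)\subseteq cgs(M)$: every w-local submodule is cofinitely Rad-supplemented by Lemma~\ref{Lemma: w-local is CGS}, so each w-local submodule lies in the sum $cgs(M)$, whence $wLoc(M)\subseteq cgs(M)$; therefore $M/cgs(M)$ is a homomorphic image of $M/wLoc(M)$ and, having no maximal submodules itself if the latter has none (a quotient of a module without maximal submodules has no maximal submodules), we are done. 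Finally $(d)\Rightarrow(a)$: the submodule $cgs(M)$ is a sum of cofinitely Rad-supplemented submodules, hence a homomorphic image of their direct sum, hence cofinitely Rad-supplemented by Theorem~\ref{Corollary: Arbitrary sum of CGS}; since $M/cgs(M)$ has no maximal submodules it is in particular radical-full (a module with no maximal submodules equals its own radical), so $M$ is an extension of a radical-full module by a cofinitely Rad-supplemented module and Lemma~\ref{Lemma: M/N has no maximal and N CGS then M CGS} gives that $M$ is cofinitely Rad-supplemented.

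The main obstacle, and the step deserving the most care, is $(b)\Rightarrow(c)$: one must be sure that the passage from ``every maximal submodule has a Rad-supplement'' to ``the quotient by the sum of w-local submodules has no maximal submodule'' correctly uses that the Rad-supplement $L$ produced is genuinely w-local (so that it is absorbed into $wLoc(M)$). This is exactly where Lemma~\ref{Lemma: gs of maximal submodule is local} is indispensable and where Ding and Wang's original argument broke down — the natural temptation is to claim $L$ is local rather than merely w-local. The remaining implications are formal consequences of the already-established closure properties, and the identification ``no maximal submodules $\Leftrightarrow$ radical-full'' needs only the definition of the radical as the intersection of maximal submodules (empty intersection being the whole module).
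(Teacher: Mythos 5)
Your proposal is correct and follows essentially the same route as the paper: $(a)\Rightarrow(b)$ is immediate, $(b)\Rightarrow(c)$ uses Lemma~\ref{Lemma: gs of maximal submodule is local} exactly as the paper does, and $(d)\Rightarrow(a)$ combines Theorem~\ref{Corollary: Arbitrary sum of CGS} with Lemma~\ref{Lemma: M/N has no maximal and N CGS then M CGS} just as in the text. The only (harmless) variation is in $(c)\Rightarrow(d)$, where you observe directly that $wLoc(M)\subseteq cgs(M)$ via Lemma~\ref{Lemma: w-local is CGS} and pass to the quotient, whereas the paper runs the same idea as a contradiction argument with a maximal submodule containing $cgs(M)$; both are valid and rest on the same lemma.
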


\begin{proof}$(a)\Rightarrow (b)$ Clear.

$(b)\Rightarrow (c)$ Suppose $wLoc(M) \subseteq K$ for some maximal
submodule $K$ of $M$. Let $N$ be a Rad-supplement of $K$ in $M$, i.e.\
$K+N=M$ and $K \cap N \subseteq \Rad N$. Then, by Lemma 
\ref{Lemma: gs of maximal submodule is local}, $N$ is a $w$-local
submodule of $M$ and so $N \subseteq wLoc(M) \subseteq K$, a contradiction.
Thus $M/wLoc(M)$ has no maximal submodules.

$(c)\Rightarrow (d)$ Suppose $K$ is a maximal submodule of $M$ with
$cgs(M) \subseteq K$. Since $M/wLoc(M)$ has no maximal submodules,
we have $K+wLoc(M)=M$. So $K+L=M$ for some $w$-local submodule $L$
of $M$. By Lemma \ref{Lemma: w-local is CGS}, $L$ is cofinitely
Rad-supplemented and so $L \subseteq cgs (M) \subseteq K$, a
contradiction. Hence $M/cgs(M)$ has no maximal submodules.

$(d)\Rightarrow (a)$ By Theorem \ref{Corollary: Arbitrary sum of
CGS}, $cgs(M)$ is cofinitely Rad-supplemented and so, by Lemma
\ref{Lemma: M/N has no maximal and N CGS then M CGS}, $M$ is
cofinitely Rad-supplemented.
\end{proof}

For finitely generated modules $M$, e.g.\ $M=R$,  we can rephrase the theorem as follows:

\begin{corollary}\label{Corollary: finitely gen GS} The following statements are equivalent for a finitely generated module $M$.
\begin{enumerate}
\item[(a)] $M$ is Rad-supplemented;
\item[(b)] every maximal submodule of $M$ has a Rad-supplement in $M$;
\item[(c)] $M$ is the sum of finitely many w-local submodules.
\end{enumerate}
\end{corollary}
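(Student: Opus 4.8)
The plan is to obtain this corollary as a straightforward specialisation of Theorem \ref{theorem:structure of CGS modules} to the finitely generated case, so most of the work is already done. First I would note that when $M$ is finitely generated every submodule of $M$ is cofinite, and hence ``Rad-supplemented'' and ``cofinitely Rad-supplemented'' mean the same thing for $M$. Thus the equivalence of $(a)$ and $(b)$ above is literally the equivalence of conditions $(a)$ and $(b)$ in Theorem \ref{theorem:structure of CGS modules}, applied to $M$.

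For the equivalence with $(c)$, the extra ingredient I would use is the standard fact that a nonzero finitely generated module always has a maximal submodule (via Zorn's lemma). Since $M/wLoc(M)$ is a homomorphic image of the finitely generated module $M$, it too is finitely generated; therefore condition $(c)$ of Theorem \ref{theorem:structure of CGS modules}, namely that $M/wLoc(M)$ has no maximal submodules, holds if and only if $M/wLoc(M)=0$, i.e.\ $wLoc(M)=M$. It then remains to identify $wLoc(M)=M$ with statement $(c)$ of the corollary: if $M=Rx_1+\cdots+Rx_n$ and $M$ equals the sum of all its w-local submodules, then each generator $x_j$ lies in a finite subsum of w-local submodules, so finitely many of them already add up to $M$; conversely a finite sum of w-local submodules is clearly contained in $wLoc(M)$, forcing $wLoc(M)=M$. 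Chaining these equivalences with Theorem \ref{theorem:structure of CGS modules} yields $(a)\Leftrightarrow(b)\Leftrightarrow(c)$.

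I do not expect a genuine obstacle here; the proof is essentially bookkeeping on top of Theorem \ref{theorem:structure of CGS modules}. The only points requiring a word of care are the invocation that nonzero finitely generated modules possess maximal submodules, and the (routine) passage from $wLoc(M)=M$ to a \emph{finite} subsum of w-local submodules covering $M$, both of which I would state explicitly but not belabour.
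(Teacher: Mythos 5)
Your proposal is correct and is exactly the argument the paper intends: the corollary is stated there without proof as a "rephrasing" of Theorem \ref{theorem:structure of CGS modules}, relying on precisely the two observations you make (every submodule of a finitely generated module is cofinite, and a nonzero finitely generated module has a maximal submodule, so $M/wLoc(M)$ has no maximal submodules iff $wLoc(M)=M$, with finite generation then extracting a finite subsum).
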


\section{Concluding remarks}
While a ring is semiperfect (i.e. supplemented) if and only if it is
a sum of local submodules, we showed that a ring is left
Rad-supplemented if and only if it is a sum of w-local submodules.
Moreover the class of Rad-supplemented rings lies strictly between
the semilocal and the semiperfect rings. In particular any example
of a left Rad-supplemented ring which is not semiperfect, must
contain a w-local left ideal which is not local (cyclic).

\begin{bibdiv}
 \begin{biblist}
\bib{ABS}{article}{author={R.\ Alizade}, author={G.\ Bilhan}, author={P.~F.\ Smith}, title={Modules whose maximal submodules have supplements}, journal={Comm.\ Algebra}, volume={29}, year={2001},pages={2389--2405}}


\bib{ALW2}{article}{ author={Al-Takhman, K.},author={Lomp, C.}, author={Wisbauer, R.}, title={$\tau$-complemented and $\tau$-supplemented modules}, journal={Algebra  Discrete Math.}, volume={3}, year={2006}, pages={1--15}}

\bib{Bass}{article}{author={Bass, H.},title={Finitistic dimension and a homological generalization of semiprimary rings.}, journal={Trans.\ Amer.\ Math.\ Soc.},volume={95}, pages={466--488}, year={1960}}

\bib{BessenrodthBrungsTorner}{article}{author={C.\ Bessenrodt},author={H.\ H.\ Brungs},author={G.T\"orner}, title={Right chain rings. Part 1}, journal={Schriftenreihe des Fachbereiches Mathematik, Universit\"at Duisburg}, number={181},year={1990}}

\bib{ALW}{book}{title={Lifting Modules. Supplements and Projectivity in Module Theory},  author={Clark, J.},author={Lomp, C.},author={Vanaja, N.},author={Wisbauer, R.},date={2006},series={Frontiers in Mathematics},pages={406},publisher={Birkh\"auser},address={Basel}}

\bib{Dubrovin}{article}{author={Dubrovin, N.I.}, title={Chain domains.},journal={Mosc.\ Univ.\ Math.\ Bull.\ }, volume={35}, number={2},pages={56-60},year={1980}}

\bib{Facchini}{book}{author={A.Facchini},title={Module Theory. Endomorphism rings and direct sum decompositions in some classes of modules}, series={Progress in Mathematics},number={167}, publisher={Birkh\"auser},address={Basel}, year={1998}}

\bib{GS}{article}{author={Gerasimov, V.\ N.}, author={Sakhaev, I.\ I.}, title={A counterexample to two conjectures on projective and flat modules}, journal={(Russian),  Sibirsk.\ Mat.\ Zh.}, volume={25}, pages={31--35}, year={1984}}

\bib{KaschMares}{article}{author={Kasch, F.}, author={Mares, E.\ A.}, title={Eine Kennzeichnung semi-perfekter Moduln}, journal={Nagoya Math.\ J.}, volume={27}, pages={525--529},year={1966}}


\bib{Puninski}{article}{author={G. Puninski}, title={Projective modules over the endomorphism ring of a biuniform module}, journal={J.\ Pure Appl.\ Algebra}, volume={188}, pages={227--246},year={2004}}

\bib{generalized}{article}{author={Y.\ Wang},author={N.\ Ding},title={Generalized supplemented modules.}, journal={Taiwanese J.\ Math},volume={10},pages={1589--1601},year={2006}}

\bib{Ware}{article}{author={R.\ Ware}, title={Endomorphism rings of projective modules}, journal={Trans.\ Amer.\ Math.\ Soc.}, volume={155},  year={1971}, pages={233--256}}

\bib{Wisbauer}{book}{author={R. Wisbauer},title={Foundations of Modules and Rings}, publisher={Gordon and Breach},year={1991}}


\end{biblist}
\end{bibdiv}

\end{document}